\documentclass[10 pt]{article}
\usepackage[centertags]{amsmath}
\usepackage{amssymb}
\usepackage{amsthm}
\usepackage{newlfont}
\usepackage{amsfonts}
\usepackage{times}
\usepackage{amssymb,amsthm,latexsym,epsfig,pgf}
\usepackage[centertags]{amsmath}
\usepackage{newlfont}
\usepackage{amsfonts}
\usepackage{graphicx}
\usepackage{color}
\usepackage[colorlinks]{hyperref}
\usepackage{mathptmx}
\theoremstyle{plain}
\newtheorem{thm}{Theorem}
\newtheorem{cor}{Corollary}
\newtheorem{lem}{Lemma}
\newtheorem{prop}{Proposition}
\newtheorem{conjec}{Conjecture}
\newtheorem{defn}{Definition}

\newcommand{\ra}{\rightarrow}

\newcommand{\sub}{\subseteq}
\newcommand{\mo}{$L=x_1x_2...x_n$}
\newcommand{\bks}{\backslash}

\newcommand{\degr}{$d^+_D(v)\leq d^{++}_D(v)$}
\begin{document}

\begin{center}
\textbf{About the Second Neighborhood Problem in Tournaments Missing Disjoint Stars}
\end{center}

\begin{center}
Salman Ghazal   \footnote{Department of Mathematics, Faculty of Sciences I, Lebanese University, Hadath, Beirut, Lebanon.\\
                       E-mail: salmanghazal@hotmail.com}$^,$
                \footnote{Institute Camille Jordan, D\'{e}partement de Math\'{e}matiques, Universit\'{e} Claude Bernard Lyon 1, France.}
\end{center}

\begin{abstract}
Let $D$ be a digraph without digons. Seymour’s second neighborhood
conjecture states that $D$ has a vertex $v$ such that $d^+(v) \leq d^{++}(v)$. Under
some conditions, we prove this conjecture for digraphs missing $n$ disjoint
stars. Weaker conditions are required when $n = 2$ or $3$. In some cases we
exhibit $2$ such vertices.
  
\end{abstract}

\section{Introduction}
In this paper, a digraph $D$ is a pair of two disjoint finite sets $(V, E)$ such that $E\sub V\times V$. $E$ is the arc set and $V$ is the vertex set and they are denoted by $E(D)$ and $V(D)$ respectively. An oriented graph is a digraph without loop and digon (directed cycles of length two).
If $K\subseteq V(D)$ then the induced restriction of $D$ to $K$ is denoted by $D[K]$. As usual, $N^{+}_D(v)$ (resp. $N^{-}_D(v)$)
denotes the (first) out-neighborhood (resp. in-neighborhood) of a vertex $v\in V$. $N^{++}_D(v)$ (resp. $N^{--}_D(v)$) denotes the second
\textit{out-neighborhood} (\textit{in-neighborhood}) of $v$, which is the set of vertices that are at distance 2 from $v$ (resp. to $v$). We also denote
$d^{+}_D(v)=|N^{+}_D(v)|$, $d^{++}_D(v)=|N^{++}_D(v)|$, $d^{-}_D(v)=|N^{-}_D(v)|$ and $d^{--}_D(v)=|N^{--}_D(v)|$.
We omit the subscript if the digraph is clear from the context.
For short, we write $x\rightarrow y$ if the arc $(x,y)\in E$.
A vertex $v\in V(D)$ is called \textit{whole} if it is adjacent to every vertex in $V(D)-\{v\}$. A \textit{sink} $v$ is a vertex with $d^{+}(v)=0$, while a source $v$ is a vertex with $d^{-}(v)=0$.
For $x,y\in V(D)$, we say $xy$ is a \textit{missing edge} of $D$ if neither $(x,y)$ nor $(y,x)$ are in $E(D)$.
The\textit{ missing graph} $G$ of $D$ is
the graph whose edges are the missing edges of $D$ and whose vertices are the non whole vertices of $D$.
In this case, we say that $D$ is \textit{missing} $G$. So, a tournament does not have any missing edge. A star of center $x$ is a graph whose edge set has the form $\{a_ix; i=1,...,k\}$. In this paper, $n$ stars are said to be disjoint if any two of them do not share a common vertex. \\

\par A vertex $v$ of $D$ is said to have the \textit{second neighborhood property} (SNP) if \degr 
. In 1990, Seymour conjectured the following:\\

\begin{conjec}
\textbf{(Seymour's Second Neighborhood Conjecture (SNC))}\cite{dean}
\hspace{5pt}Every oriented graph has a vertex with the SNP.
\end{conjec}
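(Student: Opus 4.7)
The final statement is Seymour's Second Neighborhood Conjecture in its full generality, which remains open; what follows is the attack I would attempt and the obstruction I expect to meet.

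The plan is to argue by contradiction from a vertex-minimum counterexample $D$, so that every $v \in V(D)$ would satisfy $d^+(v) > d^{++}(v)$. I would first dispose of the easy reductions: any sink has the SNP trivially, so $\delta^+(D)\geq 1$; by known partial results (Kaneko--Locke and relatives) one may assume $\delta^+(D)$ is at least moderately large; and one may pass to a strongly connected component. I would then run two classical tools in parallel. First, a Fisher-style weighted argument: place a probability distribution $p$ on $V(D)$ that maximizes $\sum_v p(v)\bigl[d^{++}(v)-d^+(v)\bigr]$, and analyze its support via first-order optimality conditions, hoping to force a vertex in the support to exhibit the SNP. Second, a Havet--Thomass\'e median-order argument: fix a completion $T$ of $D$ to a tournament, take a median order $v_1,\dots,v_n$ of $T$, and focus on the last vertex $v_n$, which is known to satisfy the SNP in $T$.

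The next step would be to transfer the SNP for $v_n$ from $T$ back to $D$. For each missing edge of $D$ (i.e., each arc of $T$ not in $E(D)$), I would classify it by whether it is incident to $v_n$ or lies in a second-neighborhood path through $v_n$, and then bound its net effect on $d^+_D(v_n)-d^{++}_D(v_n)$ via an exchange argument. Combined with the weighted inequalities from the Fisher step and the counting identity $\sum_v d^+(v)=|E(D)|$, the hope is that these two bounds together force at least one vertex to violate the standing assumption $d^+>d^{++}$.

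The hard part, and the reason the conjecture is open, is the nonlinearity of $d^{++}$: unlike $d^+$, it is neither monotone nor additive under arc insertion or deletion, because membership $y\in N^{++}(x)$ depends jointly on $N^+(x)$ and on the arcs leaving $N^+(x)$, and these two shift simultaneously whenever one edge is toggled. In particular, the clean last-vertex argument that produces an SNP vertex in a tournament collapses as soon as enough edges are missing, since one loses control of $N^{++}(v_n)$ under the passage from $T$ to $D$. This is precisely why the paper, as the abstract indicates, restricts the missing graph to a disjoint union of stars: there the missing structure is local enough that a carefully chosen completion plus a median-order argument can be pushed through, whereas no such control is available for arbitrary $D$.
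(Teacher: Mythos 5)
You have correctly recognized that this statement is an open conjecture: the paper states it as \texttt{conjec}, offers no proof of it, and only establishes special cases where the missing graph of $D$ is a disjoint union of stars (plus degree conditions on the dependency digraph). So there is no proof in the paper to compare against, and your submission is, by its own admission, a research plan rather than a proof. That is the right call; any purported complete proof here would have been a red flag.

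As a plan, what you describe matches the paper's actual machinery for its partial results quite closely: complete $D$ to a tournament $T$, take a (weighted/local) median order, use the Havet--Thomass\'e fact that the feed vertex has the SNP in $T$, and then fight to transfer that property back to $D$. The obstruction you name --- that toggling a missing edge perturbs $N^+(v)$ and $N^{++}(v)$ simultaneously, so the feed-vertex argument loses control of $d^{++}_D$ --- is exactly the difficulty the paper's \emph{dependency digraph} $\Delta$ and the ``good edge'' / ``losing'' relations are designed to tame: a missing edge is good iff its in-degree in $\Delta$ is zero, and in that case it admits a convenient orientation that costs the feed vertex nothing in its second neighborhood. The degree hypotheses $\delta^+_\Delta>0$, $\delta^-_\Delta>0$ in the paper's theorems are precisely the conditions under which every problematic second-neighbor $z$ of the feed vertex can be recovered through some other losing edge, and the sedimentation operator $Sed(L)$ handles the remaining tight cases. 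One caveat: your suggestion to also run a Fisher-style optimal-distribution argument in parallel is not something the paper does, and it is not clear the two methods combine usefully --- the counting identity $\sum_v d^+(v)=|E(D)|$ has no analogue for $\sum_v d^{++}(v)$, which is where that half of your plan would stall.
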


In 1996, Fisher \cite{fisher} solved the SNC for tournaments by using a certain probability distribution
on the vertices. Another proof of Dean's conjecture was established in 2000 by Havet and Thomass\'{e} \cite{m.o.}. Their short proof uses a tool called median orders. Furthermore, they have proved that if a tournament has no sink vertex then there are at least two vertices
with the SNP. In 2007 Fidler and Yuster \cite{fidler} proved, using median orders and dependency digraphs, that SNC holds for digraphs missing a matching, a star or a complete graph. Ghazal proved more general statements in \cite{gs, gg} and proved that the SNC holds for some other classes of digraphs \cite{contrib}.\\

\section{Definitions and  Preliminary Results}

\par
Let $L=v_1v_2...v_n$  be an ordering of the vertices of a digraph $D$.
An arc $e=(v_i,v_j)$ is \textit{forward} with respect to $L$ if $i<j$. Otherwise $e$ is a \textit{backward} arc. The weight of $L$ is $\omega(L)=|\{(v_i,v_j)\in E(D); i<j\}|$. $L$ is called a \textit{median order} of $D$ if $\omega(L)=max\{\omega(L'); L'$ is an ordering of the vertices of $D\}$; that is $L$ maximizes the number of forward arcs.
In fact, the median order $L$ satisfies the \textit{feedback property}: For all $1\leq i\leq j\leq n:$
$$ d^{+}_{D[i,j]}(v_i)  \geq    d^{-}_{D[i,j]}(v_i)  $$
and
$$ d^{-}_{D[i,j]}(v_j)  \geq    d^{+}_{D[i,j]}(v_j)  $$
where $[i,j]:=\{v_i,v_{i+1}, ...,v_j\}$ (See \cite{m.o.}).\\

It is also known that if we reverse the orientation of a backward arc $e=(v_i,v_j)$ of $D$ with respect to $L$, then $L$ is again a weighted median order of the new digraph $D'=D-(v_i,v_j)+(v_j,v_i)$ (See \cite{contrib}).\\

Let $L=v_1v_2...v_n$ be a median order. Among the vertices not in $N^+(v_n)$ two types are distinguished: A vertex $v_j$ is \textit{good} if there is $i\leq j$ such that
$v_n\ra v_i\ra v_j$, otherwise $v_j$ is a \textit{bad vertex}. The set of good vertices of $L$ is denoted by $G_L^D$ \cite{m.o.} ( or $G_L$ if there is no confusion ).
Clearly, $G_L \sub N^{++}(v_n)$. The last vertex $v_n$ is called a feed vertex of $D$.\\

\par We say that a missing edge $x_1y_1$ \textit{loses to} a missing edge $x_2y_2$ if:
$x_1\rightarrow x_2$, $y_2\notin N^{+}(x_1)\cup N^{++}(x_1)$, $y_1\rightarrow y_2$ and $x_2\notin N^{+}(y_1)\cup N^{++}(y_1)$.
The \textit{dependency digraph} $\Delta$ of $D$ is defined as follows: Its vertex set consists of all the missing
edges and $(ab,cd)\in E(\Delta)$ if $ab$ loses to $cd$ \cite{fidler,contrib}. Note that $\Delta$ may contain digons.\\

\begin{defn}\cite{gs}
In a digraph D, a missing edge $ab$ is called a \emph{good missing edge} if:\\
$(i)$   $(\forall v \in V\backslash\{a,b\})[(v\rightarrow a)\Rightarrow(b\in N^{+}(v)\cup N^{++}(v))]$ or\\
$(ii)$ $(\forall v \in V\backslash\{a,b\})[(v\rightarrow b)\Rightarrow(a\in N^{+}(v)\cup N^{++}(v))]$.\\
If $ab$ satisfies $(i)$ we say that $(a,b)$ is a convenient orientation of $ab$.\\
If $ab$ satisfies $(ii)$ we say that $(b,a)$ is a convenient orientation of $ab$.
\end{defn}

We will need the following observation:
\begin{lem} (\cite{fidler}, \cite{contrib})
 Let $D$ be an oriented graph and let $\Delta$ denote its dependency digraph. A missing edge $ab$ is good
if and only if its in-degree in $\Delta$ is zero.
\end{lem}

Let $D$ be a digraph and let $\Delta$ denote its dependency digraph. Let $C$ be a connected component of $\Delta$.
Set $K(C)=$ $\{u\in V(D);$ there is a vertex $v$ of $D$ such that $uv$ is a missing edge and belongs to $C$ $\}$. The \emph{interval
graph of $D$}, denoted by $\mathcal{I}_D$ is defined as follows. Its vertex set consists of the connected components
of $\Delta$ and two vertices $C_1$ and $C_2$ are adjacent if $K(C_1)\cap K(C_2)\neq \phi$. So  $\mathcal{I}_D$ is the
intersection graph of the family $\{K(C);C$ is a connected component of $\Delta$ $\}$. Let $\xi$ be a connected component of $\mathcal{I}_D$.
We set $K(\xi)=\displaystyle\cup _{C\in \xi}K(C)$. Clearly, if $uv$ is a missing edge in $D$ then there is a unique connected component $\xi$ of
$\mathcal{I}_D$ such that $u$ and $v$ belong to $K(\xi)$. For $f\in V(D)$, we set $J(f)=\{f\}$ if $f$ is a whole vertex, otherwise $J(f)=K(\xi)$,
where $\xi$ is the unique connected component of $\mathcal{I}_D$ such that $f\in K(\xi)$. Clearly, if $x\in J(f)$ then $J(f)=J(x)$ and
if $x\notin J(f)$ then $x$ is adjacent to every vertex in $J(f)$.\\

Let $L=x_{1}\cdots x_{n}$ be a  median order of a digraph $D$. For $i<j$, the sets
$[i,j]:=[x_i,x_j]:=\{x_i,x_{i+1},...,x_j\}$ and  $]i,j[=[i,j]\bks \{x_i,x_j\}$
are called \textit{intervals} of $L$. We recall that $K\subseteq V(D)$ is an \textit{interval of $D$} if for every $u,v\in K$ we have:
$N^+(u)\backslash K =N^+(v)\backslash K$ and $N^-(u)\backslash K =N^-(v)\backslash K$. The following shows a relation
between the intervals of $D$ and the intervals of $L$.

\begin{prop}\cite{gg}
Let $\mathcal{I}=\{I_1,...,I_r\}$ be a set of pairwise disjoint intervals of $D$. Then for every median order $L$ of $D$,
there is a weighted median order $L'$ of $D$ such that: $L$ and $L'$ have the same feed vertex and every interval in $\mathcal{I}$
is an interval of $L'$.
\end{prop}

We say that $D$ is \textit{good digraph} if the sets $K(\xi)$'s are intervals of $D$. By the previous proposition, every good digraph has a  median order $L$
such that the $K(\xi)$'s form intervals of $L$. Such an enumeration is called a \textit{good median order} of the good digraph $D$ \cite{gg}.\\

\begin{thm}\label{prince}\cite{gg}
Let $D$ be a good oriented graph and let L be a good  median order of $D$, with feed vertex f.
Then for every $x\in J(f)$, we have $|N^+(x)\backslash J(f)|\leq |G_L\backslash J(f)|$.
So if $x$ has the  SNP in $D[J(f)]$, then it has the  SNP in $D$.
\end{thm}

\begin{cor}(\cite{m.o.})
Let L be a median order of a tournament with feed vertex f. Then $|N^+(f))| \leq |G_L|$.
\end{cor}

Let $L$ be a good median order of a good oriented graph $D$ and let $f$ denote its feed vertex. By theorem \ref{prince}, for every
$x\in J(f)$, $|N^+(x)\backslash J(f)|\leq |G_L\backslash J(f)|$. Let $b_1,\cdots,b_r$ denote the bad vertices of
$L$ not in $J(f)$ and $v_1,\cdots ,v_s$ denote the non bad vertices of $L$ not in $J(f)$, both enumerated in increasing order with respect
to their index in $L$.\\
If $|N^+(x)\backslash J(f)|< |G_L\backslash J(f)|$, we set $Sed(L)=L$. If $|N^+(x)\backslash J(f)|=|G_L\backslash J(f)|$,
we set $sed(L)=b_1\cdots b_rJ(f)v_1\cdots v_s$. This new order is called the \textit{sedimentation of $L$}.
\begin{lem}\label{gsedlem}\cite{gg}
Let $L$ be a good  median order of a good  oriented graph $D$. Then $Sed(L)$ is a good median order of $D$.
\end{lem}

In the rest of this section, $D$ is an oriented graph missing a matching and $\Delta$ denotes its dependency digraph.
We begin by the following lemma:

\begin{lem}\cite{fidler}\label{structdep}
The maximum out-degree of $\Delta$ is one and the maximum in-degree of $\Delta$ is one. Thus $\Delta$
is composed of vertex disjoint directed paths and directed cycles.
\end{lem}

\begin{proof}
 Assume that $a_{1}b_{1}$ loses to $a_{2}b_{2}$ and $a_{1}b_1$ loses to $a'_{2}b'_{2}$, with $a_1\ra a_2$ and $a_1\ra a'_2$. The edge $a'_2b_2$ is not a missing edge
of $D$. If $a'_2\ra b_2$ then $b_1\ra a'_2\ra b_2$, a contradiction. If $b_2\ra a'_2$ then $b_1\ra b_2\ra a'_2$, a contradiction. Thus,
the maximum out-degree of $\Delta$ is one. Similarly, the maximum in-degree is one.
\end{proof}

In the following, $C=a_1b_1,...,a_kb_k$ denotes a directed cycle of $\Delta$, namely
$a_i\rightarrow a_{i+1}$, $b_{i+1}\notin N^{++}(a_i)\cup N^{+}(a_i)$, $b_i\rightarrow b_{i+1}$
and $a_{i+1}\notin N^{++}(b_i)\cup N^{+}(b_i)$, for all $i<k$.

\begin{lem}
(\cite{fidler})\label{howlose}
If k is odd then $a_k\rightarrow a_1$, $b_1\notin N^{++}(a_k)\cup N^+(a_k)$,
$b_k\rightarrow b_1$ and $a_1\notin N^{++}(b_k)\cup N^+(b_k)$. If k is even then $a_k\rightarrow b_1$, $a_1\notin N^{++}(a_k)\cup N^+(a_k)$,
$b_k\rightarrow a_1$ and $b_1\notin N^{++}(b_k)\cup N^+(b_k)$.

\end{lem}

\begin{lem}\cite{fidler}\label{intervals}
$K(C)$ is an interval of $D$.
\end{lem}

\begin{proof}
Let $f\notin K(C)$. Then $f$ is adjacent to every vertex in $K(C)$.
If $a_1\ra f$ then $b_2\ra f$, since otherwise $b_2\in N^{++}(a_{1})\cup N^{+}(a_{1})$ which is a contradiction.
So $N^{+}(a_{1})\bks K(C)\sub N^{+}(b_{2})\bks K(C)$. Applying this to every losing relation of $C$ yields
$N^{+}(a_{1})\bks K(C)\sub N^{+}(b_{2})\bks K(C)\sub N^{+}(a_{3})\bks K(C)...\sub N^{+}(b_{k})\bks K(C)\sub N^{+}(b_{1})\bks K(C)
\sub N^{+}(a_{2})\bks K(C)...\sub N^{+}(a_{k})\bks K(C)\sub N^{+}(a_{1})\bks K(C)$ if $k$ is even. So these inclusion are equalities.
An analogous argument proves the same result for odd cycles.
\end{proof}

\section{Main Results}

\subsection{Removing n stars}
\par We recall that a vertex $x$ in a tournament $T$ is a king if $\{x\}\cup N^+(x)\cup N^{++}(x)=V(T)$. It is well known
that every tournament has a king. However, for every natural number $n\notin \{2,4\}$, there is a tournament $T_n$ on $n$ vertices, such that every vertex is a king for this tournament.\\

A digraph is called non trivial if it has at least one arc.
\begin{prop}\label{goodstars}
Let $D$ be a digraph missing disjoint stars. If the connected components of its dependency digraph are non-trivial strongly connected, then $D$ is a good digraph.
\end{prop}

\begin{proof}

 Let $\xi$ be a connected component of $\Delta$. First, suppose that $K(\xi)=K(C)$ for some directed cycle $C=a_1b_1,a_2b_2,...,a_nb_n$ in $\Delta$, namely $a_i\ra a_{i+1}$ and $b_{i+1}\notin N^+(a_i)\cup N^{++}(a_i)$. If the set of the missing edges $\{a_ib_i; i=1,...,n\}$ forms a matching, then by lemma \ref{intervals}, $K(C)$ is an interval of $D$. \\

 So we will suppose that a center $x$ of a missing star appears twice in the list $a_1,b_1,a_2,b_2,...,a_n,b_n$ and assume without loss of generality that $x=a_1$. Suppose that $n$ is even. Set $K_1=\{a_1,b_2,...,a_{n-1},b_{n}\}$ and $K_2=K(C)\backslash K_1$.  \\

 Suppose that $a_{n}\rightarrow b_1$ and $a_1\notin N^{+}(a_{n})\cup N^{++}(a_{n})$. Then by following the proof of lemma \ref{intervals} we get the desired result. \\

 Suppose $a_{n}\rightarrow a_1$ and $b_1\notin N^{+}(a_{n})\cup N^{++}(a_{n})$. Then by following the proof of lemma \ref{intervals} we get that $K_1$ and $K_2$ are intervals of $D$. Assume, for contradiction that $K_1\cap K_2 =\phi$ and let $i>1$ be the smallest index for which $x$ is incident to $a_ib_i$. Clearly $i>2$. However, $b_3\notin K_1$ and $x=a_1\ra a_2\ra a_3$ implies that $i>3$.
 Suppose that $x=a_i$. Note that $i$ must be odd by definition of $K_1$. Since $b_2\rightarrow a_1=x=a_i$ and $a_3\notin N^{+}(x)\cup N^{++}(x)$ then
 $a_3\rightarrow x$. Similarly $b_4,a_5,...,b_{i-1}$ are in-neighbors of $x$. However, $b_{i-1}$ is an out-neighbor of $a_i=x$, a contradiction. Suppose that $x=b_i$. Similarly, $a_3,b_4,...,a_{i-1}$ are in-neighbors of $x$. However,
 $a_{i-1}$ is an out-neighbor of $b_i=x$, a contradiction. Thus $K_1\cap K_2\neq \phi$. Whence, $K=K_1\cup K_2$ is an interval of $D$. Similar argument is used to prove it when $n$ is odd.\\

 This result can be easily extended to the case when $K(\xi)=K(C)$ and $C$ is a non trivial strongly connected component of $\Delta$, because between any two missing edges $uv$ and $zt$ there is directed path from $uv$ to $zt$
 and a directed path from $zt$ to $uv$. These two directed paths creat many directed cycles that are used to prove the desired result.\\

 This also is extended to the case when $K(\xi)=\displaystyle\cup_{C\in \xi} K(C)$: Let $u$ and $u'$ be two vertices of $K(\xi)$. There are two non trivial strongly connected components of $\Delta$ such that $u\in K(C)$ and $u'\in K(C')$. Since $\xi$ is a connected component of $\mathcal{I}_D$, there is a path $C=C_0C_1...C_n=C'$. For all $i>0$, there is  $u_i\in K(C_{i-1})\cap K(C_i)$, by definition of edges in $\mathcal{I}_D$. Therefore, $N^+(u)\bks K(\xi)=N^+(u_1)\bks K(\xi)=...=N^+(u_i)\bks K(\xi)=...=N^+(u_n)\bks K(\xi)=N^+(u')\bks K(\xi)$ and $N^-(u)\bks K(\xi)=N^-(u_1)\bks K(\xi)=...=N^-(u_i)\bks K(\xi)=...=N^-(u_n)\bks K(\xi)=N^-(u')\bks K(\xi)$.

\end{proof}

\begin{thm}
Let $D$ be a digraph obtained from a tournament by deleting the edges of disjoint stars. Suppose that, in the induced
tournament by the centers of the missing stars, every vertex is a king. If $\delta^-_{\Delta}>0$ then $D$ satisfies SNC.
\end{thm}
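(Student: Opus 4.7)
The plan is to combine the weighted median-order machinery of Lemmas 1 and 2 with the goodness framework of Lemma 3, and to exploit the king property on the centers together with the dependency-digraph hypothesis to locate a feed vertex with the weighted SNP. The condition $\delta^-_\Delta>0$ means that no missing edge is good in the sense of Definition 1, so convenient orientations cannot be peeled off one at a time; the dependency structure itself must be used to force the required vertices into second out-neighborhoods.

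First, I would verify that $D$ is a good digraph, in order to apply Lemma 1 and the $Sed$ operation. Lemma 3 delivers goodness when the connected components of $\Delta$ are non-trivial strongly connected; under the weaker hypothesis $\delta^-_\Delta>0$, each component of $\Delta$ is still a finite digraph with positive minimum in-degree, hence contains a directed cycle and thus a non-trivial strongly connected component. I would extend the cycle-chaining argument of Lemma 3's proof to route all vertices of $\Delta$ through these non-trivial SCCs, showing that each $K(\xi)$ remains an interval of $D$, so that good weighted local median orders are available.

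Next, assigning weight $1$ to each leaf of a missing star and an appropriate positive weight to each center, take a good weighted median order $L=x_1\ldots x_n$ with feed vertex $f$. By Lemma 1 it suffices to exhibit some $x\in J(f)$ with the weighted SNP inside $D[J(f)]$. If $J(f)=\{f\}$ then $f$ is whole and the claim is immediate from the feedback property. Otherwise $J(f)$ contains a center $c$ together with some of its leaves; the king hypothesis guarantees that every other center lies in $\{c\}\cup N^+_D(c)\cup N^{++}_D(c)$, and, combined with the feedback inequalities restricted to $[1,t-1]$, where $J(f)=[x_t,x_n]$, this should produce the weighted SNP at an appropriate vertex of $J(f)$. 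If the feed $f$ itself fails, I would iterate $Sed^{q}(L)$ from Lemma 2 and invoke the stable/periodic dichotomy to eventually stabilize at a feed vertex satisfying the SNP.

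The main obstacle is translating the king guarantee on the centers' tournament, which knows nothing of the leaves or of the missing star edges, into a statement about $D$. A center $c'$ different from the star-center of $f$ automatically lies in $N^+_D(c)\cup N^{++}_D(c)$ by kingship, but its leaves must be handled separately: a leaf $\ell'$ of another star belongs to $N^{++}_D(f)$ only via a witness $v$ with $f\ra v$ and $v\ra \ell'$, and the missing edge $c'\ell'$ may sabotage such a witness. The hypothesis $\delta^-_\Delta>0$ provides, for each missing edge, a companion missing edge that loses to it, and it is precisely this companion which routes the required leaves into the second out-neighborhood. Making this bookkeeping tight across all disjoint stars simultaneously, while preserving the feedback inequalities under $Sed$ iterations, will be the crux of the argument.
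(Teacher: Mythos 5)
Your plan heads in a genuinely different direction from the paper, and it has two concrete gaps. First, the goodness step does not go through. Lemma 3 requires every connected component of $\Delta$ to be a \emph{non-trivial strongly connected} digraph; the hypothesis $\delta^-_\Delta>0$ only guarantees that each component contains a directed cycle, not that every missing edge lies on one. The losing relation yields only one-directional inclusions of the form $N^+(a)\setminus K(\xi)\subseteq N^+(b)\setminus K(\xi)$ along arcs of $\Delta$, and these close into the equalities needed for $K(\xi)$ to be an interval only when one can travel in $\Delta$ both from $ab'$ to $bb''$ and back. With positive in-degree alone, a missing edge of out-degree zero in $\Delta$ sits at the end of a path leaving a strongly connected component with no return path, so the interval property (and hence the applicability of Lemma 1 and of $Sed$) is not established. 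Note that the paper's other results that do use this machinery (Lemma 4, Theorems 7 and 9) assume \emph{both} $\delta^+_\Delta>0$ and $\delta^-_\Delta>0$ for exactly this reason. Second, the decisive step --- showing that the chosen feed vertex actually has the SNP in $D$ --- is not carried out: you correctly identify that $\delta^-_\Delta>0$ supplies, for every missing edge $cz$ with $f\ra c\ra z$, a companion edge $uv$ losing to $cz$ whose endpoint $v$ satisfies $f\ra v\ra z$ and thus routes $z$ into $N^{++}(f)$, but you defer the whole bookkeeping as ``the crux,'' so the proposal remains a plan rather than a proof.

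For comparison, the paper's argument is considerably more elementary and avoids the weighted/goodness framework entirely. It orients every missing edge from the leaf toward the center of its star, takes an ordinary median order $L$ of the resulting tournament $T$ (so $d^+_T(f)\le d^{++}_T(f)$ by Theorem 1), and splits on the feed vertex $f$: if $f$ is whole, nothing changes; if $f$ is a center, any $T$-path $f\ra u\ra v$ with $uv$ missing forces $v$ to be a center, which lies in $N^+(f)\cup N^{++}(f)$ by kingship; if $f$ is a leaf of a star with center $x$ and $fx$ loses to some edge $by$, then $y$ is the unique new second out-neighbor in $T$ while $x$ migrates from second to first out-neighborhood, so the counts balance; and if $fx$ loses to nothing, one reorients $fx$ toward $f$ (keeping $L$ a median order) and uses $\delta^-_\Delta>0$ to certify $N^{++}(f)=N^{++}_{T'}(f)$. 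If you want to salvage your route, you would need to either add $\delta^+_\Delta>0$ to the hypotheses to recover goodness, or abandon the interval machinery and argue directly on a completion of $D$ as the paper does.
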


\begin{proof}

Orient every missing edge of $D$ towards the center of its star. Let $L$ be a median order of the obtained tournament $T$ and let $f$ be its feed vertex. Then $f$ has the SNP in $T$. We prove that $f$ has the SNP in $D$ as well.\\

\par First, suppose that $f$ is a whole vertex. Then $N^+(f)=N^+_T(f)$. Let $v\in N^{++}_T(f)$. Then there $\exists u\in V(T)=V(D)$ such that $f\ra u\ra v\ra f$ in $T$. Since $f$ is whole, then $(f,u)$ and $(v,f)\in D$. If $(u,v)\in D$ then $v\in N^{++}(f)$. Otherwise, $uv$ is a missing edge and hence, $\exists ab$ that loses to $uv$, say $b\ra v$ and $u\notin N^+(b)\cup N^{++}(b)$. But $fb$ is not a missing edge, since $f$ is whole. Then $(f,b)\in D$, since otherwise, $b\ra f\ra u$ in $D$ which is a contradiction. Therefore, $f\ra b\ra v$ in $D$. Whence, $v\in N^{++}(f)$. So $N^{++}_T(f)\sub N^{++}(f)$. Therefore, $d^+(f)=d^+_T(f)\leq d^{++}_T(f)\leq d^{++}(f)$.\\

\par Now suppose that $f$ is the center of a missing star. Then $N^+(f)=N^+_T(f)$. Let $v\in N^{++}_T(f)$.  Then there $\exists u\in V(T)=V(D)$ such that $f\ra u\ra v\ra f$ in $T$. Then $(f, u) \in D$ while $(f,v)\notin D$. If $(u,v) \in D$ then $v\in N^{++}(f)$. Otherwise, $uv$ is a missing edge and $v$ is the center of a missing star. Then $v\in N^{+}(f)\cup N^{++}(f)$, because $f$ is a king for the centers of the missing stars. Note that $v\notin N^{+}(f)$. So $N^{++}_T(f)\sub N^{++}(f)$. Therefor, $f$ has the SNP in $D$.\\

\par Finally, suppose that $f$ is not whole and not the center of a missing star. Then $\exists x$ a center of a missing star such that $fx$ is a missing edge. We distinguish between two cases.\\

\par In the first case, we suppose that $fx$ does not lose to any missing edge. We reorient $fx$ as $(x,f)$. Since $(f,x)\in T$ is a backward arc with respect to $L$, the again $L$ is a median order of the new tournament $T'$ obtained by reversing the orientation of $fx$. Moreover, $N^+(f)=N^+_{T'}(f)$ and $f$ has the SNP in $T'$.  Let $v\in N^{++}_{T'}(f)$.  Then there $\exists u\in V(T)=V(D)$ such that $f\ra u\ra v\ra f$ in $T'$. Then $(f, u) \in D$ while $(f,v)\notin D$. If $(u,v) in D$ then $v\in N^{++}(f)$. Otherwise $uv$ is a missing edge and $v$ is the center of a missing star.Since $\Delta$ has no source, there is a missing edge that loses to $uv$. Suppose that this edge is of the form $ax$. Then we must have $x\ra v$ and $u\notin N^+(x)\cup N^{++}(x)$, by definition of losing relation and due to the fact that $v\in N^+(x)\cup N^{++}(x)$ ($x$ is a king for the centers of the missing stars). If $v\notin N^{++}(f)$, then $fx$ loses $uv$ which is a contradiction to the supposition of this case. Hence, $v\notin N^{++}(f)$. Now, suppose that the missing edge that loses to $uv$ is of the form $by$ with $x\notin \{b,y\}$. Suppose without loss of generality that $y$ is the center of a missing star containing $by$. Then $y\ra v$ and $u\notin N^+(y)\cup N^{++}(y)$, by definition of losing relation and due to the fact that $v\in N^+(y)\cup N^{++}(y)$ ($y$ is a king for the centers of the missing stars). But $(f,u)\in D$ and $fy$ is not a missing edge, then $(f,y)\in D$. Thus $f\ra y\ra v$. Whence, $v\in N^{+}(f)\cup N^{++}(f)$. So $N^{++}_{T'}(f)\sub N^{++}(f)$. Therefor, $f$ has the SNP in $D$ as well.\\

\par In the second case, we suppose that $fx$ loses to some missing edge $by$. We may assume without loss of generality that $y$ is the center of a missing star containing $by$. Then we must have $x\ra y$ and $b\notin N^{+}(x)\cup N^{++}(x)$. Clearly, $N^+(f)\cup \{y\}=N^+_T(f)$.  We prove that $N^{++}_{T}(f)\sub N^{++}(f)\cup \{y\}$. Let $v\in N^{++}_{T}(f)\bks y$.  Then there $\exists u\in V(T)=V(D)$ such that $f\ra u\ra v\ra f$ in $T$. Suppose that $u=x$. Since $bv$ is not a missing edge, $x=u\ra v$ and $b\notin N^{+}(x)\cup N^{++}(x)$ then we must have $(b,v) \in D$. Whence, $f\ra b\ra v$ in $D$. Therefore $v\in N^{++}(f)$. Now suppose that $u\neq x$. Then $(f, u)\in D$. If $(u,v)\in D$ then $v\in N^{++}(f)$. Otherwise, $uv$ is a missing edge. Hence there is a missing edge $pq$ that loses to $uv$, namely, $q\ra v$ and $u\notin N^{+}(q)\cup N^{++}(q)$. If $q=x$, then we have $f\ra x\ra v \ra f$ in $T$, which is the same as the case when $u=x$. So we may suppose that $q\neq x$. Note that $q$ must be the center of a missing star. So $f,x\notin \{p,q\}$. Thus $fq$ is not a missing edge, $u\notin N^{+}(q)\cup N^{++}(q)$ and $(f,u) \in D$. Then we must have $(f,q) \in D$, since otherwise we get $q\ra f\ra u$ in $D$ which is a contradiction. Thus $f\ra q \ra v $ in $D$. Whence $v\in N^{++}(f)$. So $N^{++}_{T}(f)\sub N^{++}(f)\cup \{y\}$. Therefore $d^+(f)+1=d^+_T(f)\leq d^{++}_T(f)\leq d^{++}(f)+1$. Whence $f$ has the SNP in $D$.

\end{proof}

\subsection{Removing a star}
A more general statement to the following theorem is proved in \cite{gs} . Here we introduce another prove that uses the sedimentation technique of a median order.
\begin{thm}\cite{fidler}
Let $D$ be an oriented graph missing a star. Then $D$ satisfies SNC.
\end{thm}

\begin{proof}
 Orient all the missing edges of $D$ towards the center $x$ of the missing star. The obtained digraph is a tournament $T$. Let $L$ be a median order of $T$ that maximizes $\alpha$, the index of $x$ in $L$, and let $f$ denote its feed vertex. Reorient the missing edges incident to $f$ towards $f$ (if any). $L$ is also a median order of the new tournament $T'$. Note that $N^+(f)=N^+_{T'}(f)$ and we have $d^+_{T'}(f)\leq |G_L^{T'}|$. If $x\in G_L^{T'}$ and $d^+_{T'}(f)=|G_L^{T'}|$ then $sed(L)$ is a median order of $T'$ in which the index of $x$ is greater than $\alpha$, and also greater than the index of $f$. So we can give the missing edge incident to $f$ (if it exists then it is $xf$) its initial orientation (as in $T$) such that $sed(L)$ is a median order of $T$, a   contradiction to the fact that $L$ maximizes $\alpha$. So $x\notin G_L^{T'}$ or $d^+{T'}(f)<|G_L^{T'}|$. If $f=x$ then, clearly, $d^+(f)=d^+_{T'}(f)\leq |G_L^{T'}|\leq d^{++}_{T'}(f)=d^{++}(f)$. Now suppose that $f\neq x$. We have that $x$ is the only possible gained second out-neighbor vertex for $f$. If $x\notin G_L^{T'}$ then $G_L^{T'}\subseteq N^{++}(f)$, whence the result follows. If $d^+_{T'}(f)<|G_L^{T'}|$ then
$d^+(f)=d^+_{T'}(f)\leq |G_L^{T'}|-1\leq d^{++}(f)$. So $f$ has the SNP in $D$.
\end{proof}

\subsection{Removing 2 disjoint stars}

\par In this section, let $D$ be a digraph obtained from a tournament by deleting the edges of 2 disjoint stars and let $\Delta$ denote its dependency digraph. Let $S_x$ and $S_y$ be the two missing disjoint stars with centers $x$ and $y$ respectively, $A=V(S_x)\backslash x$, $B=V(S_y)\backslash y$, $K=V(S_x)\cup V(S_y)$ (the set of non whole vertices) and assume without loss of generality that $x\rightarrow y$. In \cite{gs} it is proved that if the dependency digraph of any digraph consists of isolated vertices only then it satisfies SNC. Here we consider
the case when the $\Delta$ has no isolated vertices.

\begin{thm}\label{2stars}
Let D be an oriented graph missing 2 disjoint stars. If $\Delta$ has no isolated vertex, then D satisfies SNC.
\end{thm}

\begin{proof}
Assume without loss of generality that $x\rightarrow y$. We note that the condition $\Delta$ has no isolated vertex, implies that for every $a\in A$ and $y\in B$ we have $y\rightarrow a$ and $b\rightarrow x$. We shall orient all the missing edges of $D$. First, we give every good edge a convenient orientation. For the other missing edges, let the orientation be towards the center of the 2 missing stars $S_{x}$ or $S_{y}$. The obtained digraph is a tournament $T$. Let $L$ be a median order of $T$ such that the index $k$ of $x$ is maximum and let $f$ denote its feed vertex. We know that $f$ has the SNP in $T$. We have only 5 cases:\\

 Suppose that $f$ is a whole vertex.
 In this case $N^+(f)=N^+_T(f)$. Suppose $f\rightarrow u\rightarrow v$ in $T$. Clearly $(f,u)\in D$. If $(u,v)\in D$ or is a convenient orientation then $v\in N^+(f)\cup N^{++}(f)$. Otherwise there is a missing edge $zt$ that loses to $uv$ with $t\rightarrow v$ and $u\notin N^+(f)\cup N^{++}(f)$. But $f\rightarrow u$, then $f\rightarrow t$, whence $f\rightarrow t\rightarrow v$ in D. Therefore, $N^{++}(f)=N^{++}_T(f)$ and $f$ has the SNP in $D$ as well.\\

 Suppose $f=x$.
 Orient all the edges of $S_{x}$ towards the center $x$. $L$ is a median order of the modified completion $T'$ of $D$.
 We have $N^+(f)=N^+_{T'}(f)$. Suppose $f\rightarrow u\rightarrow v$ in $T'$. If $(u,v)\in D$ or is a convenient orientation then $v\in N^+(f)\cup N^{++}(f)$. Otherwise $(u,v)=(b,y)$ for some $b\in B$, but $f=x\rightarrow y$. Thus,  $N^{++}(f)=N^{++}_{T'}(f)$ and $f$ has the SNP in $T'$ and $D$.\\

 Suppose $f=b\in B$.
 Orient the missing edge $by$ towards $b$. Again, $L$ is a median order of the modified tournament $T'$ and
 $N^+(f)=N^+_{T'}(f)$. Suppose $f\rightarrow u\rightarrow v$ in $T'$. If $(u,v)\in D$ or is a convenient orientation then $v\in N^+(f)\cup N^{++}(f)$. Otherwise $(u,v)=(b',y)$ for some $b'\in B$ or $(u,v)=(a,x)$ for some $a\in A$, however $x,y \in N^{++}(f)\cup N^{+}(f)$ because $f=b\rightarrow x\rightarrow y$ in $D$. Thus,  $N^{++}(f)=N^{++}_{T'}(f)$ and $f$ has the SNP in $T'$ and $D$.\\

 Suppose $f=y$.
 Orient the missing edges towards $y$ and let $T'$ denote the new tournament. We note that $B\subseteq N^{++}(y)\cap N^{++}_{T'}(y)$ due to the condition $\delta_{\Delta}>0$. Also, $x$ is the only possible new second neighbor of $y$ in $T'$. If $B\cup \{x\}\nsubseteqq
 G_L$ or $d^+_{T'}(y)<d^{++}_{T'}(y)$, then $d^+(y)=d^+_{T'}(y)\leq d^{++}_{T'}(y)-1\leq d^{++}(y)$. Otherwise,
 $B\cup \{x\}\nsubseteq G_L$ and $d^+_{T'}(y)=|G_L|$. In this case we consider the median order $Sed(L)$ of $T'$.
 Now the feed vertex of $sed(L)$ is different from $y$, the index of $x$ had increased, and the index of $y$ became less than the index of any vertex of $B$ which makes $Sed(L)$ a median order of $T$ also, in which the index of $x$ is greater than $k$, a contradiction.\\

 Suppose $f=a\in A$.
 Orient the missing edge $ax$ as $(x,a)$ and let $T'$ denote the new tournament. Note that $y$ is the only possible new second neighbor of $a$ in $T'$ and not in $D$. Also $x\in N^{++}_T(a)\cap N^{++}(a)$. If $d^+_{T'}(a)<d^{++}_{T'}(a)$, then $d^+(a)=d^+_{T'}(a)\leq d^{++}_{T'}(a)-1\leq d^{++}(a)$, hence $a$ has the SNP in $D$. Otherwise, $d^+_{T'}(a)=|G_L|=d^{++}_{T'}(a)$ and in particular $x\in G_L$. In this case we consider $sed(L)$ which is a median order of $T'$. Note that the feed vertex of $Sed(L)$ is different from $a$ and the index of $a$
 is less than the index of $x$ in the new order $Sed(L)$. Hence $Sed(L)$ is a median of $T$ as well, in which the index of $x$ is greater than $k$, a contradiction.\\
 So in all cases $f$ has the SNP in $D$. Therefore $D$ satisfies SNC.
 \end{proof}

\begin{thm}\label{good2starssnp}
Let D be a digraph obtained from a tournament by deleting the edges of 2 disjoint stars. If $\Delta$ has neither a source nor a sink and D has no sink, then D has at least two vertices with the SNP.
\end{thm}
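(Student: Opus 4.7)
The strategy mirrors the matching analogue (the two-vertex SNP result proved earlier for matchings): after the preceding theorem produces one SNP vertex as the feed vertex of a maximum-$x$-index median order, iterate the sedimentation operator on the order with this feed vertex removed, and exploit the no-sink hypothesis to force a second feed vertex to also enjoy the SNP in $D$.

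Let $T$ be the completion of $D$ constructed in the proof of the preceding theorem (good missing edges receive a convenient orientation; the remaining ones point toward the centers $x$ or $y$), and let $L=x_1\ldots x_n$ be a median order of $T$ whose index of $x$ is maximum. The preceding theorem shows that $f:=x_n$ has the SNP in $D$. Consider $L'=x_1\ldots x_{n-1}$, a good median order of $T-f$, and iterate $Sed$ on it.

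Suppose first that $L'$ is stable, so we may fix $q$ with $Sed^q(L')=w_1\ldots w_{n-1}$ satisfying the strict inequality $|N^+(w_{n-1})\setminus J(w_{n-1})|<|G_{Sed^q(L')}\setminus J(w_{n-1})|$. Then $w_1\ldots w_{n-1}f$ is again a median order of $T$, with feed vertex $w:=w_{n-1}$, and rerunning the five-case analysis of the preceding theorem on $w$ (whole vertex; $w=x$; $w=y$; $w\in A$; $w\in B$) yields the SNP in $D[w_1,\ldots,w_{n-1}]$; Lemma~1 combined with the strict inequality then absorbs the at most one ``lost'' second out-neighbor (namely $x$, $y$, or $f$, depending on the case), giving the SNP in $D$. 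Suppose instead that $L'$ is periodic. The no-sink hypothesis supplies some $x_j\in N^+_D(f)$; as in Theorem~4's proof, $f$ is an out-neighbor of the feed vertex of every $Sed^q(L')$, so $x_j$ is never itself that feed vertex, and periodicity then forces $x_j$ to be a bad vertex of some $Sed^q(L')=w_1\ldots w_{n-1}$ (otherwise its index would rise without bound). Setting $w:=w_{n-1}$, we have $w\to f\to x_j$, whence $x_j\in N^{++}_D(w)\setminus G_{Sed^q(L')}$; rerunning the preceding theorem's argument on $w$ delivers the SNP in $D[w_1,\ldots,w_{n-1}]$, and the extra second out-neighbor $x_j$ upgrades this to the SNP in $D$.

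The main obstacle lies in those subcases of the preceding theorem (feed vertex $=y$ or in $A$) where the original proof closed by invoking the maximality of the index of $x$. That role must now be played by the stable/periodic dichotomy itself: the strict inequality is the slack in the stable case, while $x_j$ is the slack in the periodic case. One must also verify that the reorientations employed inside each of the five cases of the preceding theorem are compatible with the arcs $w\to f$ and $f\to x_j$ used to identify the additional second out-neighbor, so that no arc needed for the SNP count is destroyed in the process.
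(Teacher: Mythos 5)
Your outer skeleton (remove the feed vertex, iterate $Sed$ on $L'=x_1\ldots x_{n-1}$, split into stable/periodic, and use the no-sink hypothesis to produce $x_j$ in the periodic case) does coincide with the paper's closing argument. But the engine you put inside each case has a genuine hole, and it is exactly the one you flag yourself and then wave away. Re-running the five-case analysis of the preceding theorem on the feed vertex $w$ of $Sed^q(L')$ does not work in the cases $w=y$ and $w\in A$: those cases are closed in the preceding proof by deriving a contradiction with the maximality of the index of $x$ among all median orders of $T$, and the orders $Sed^q(L')$ (and $w_1\ldots w_{n-1}f$) do not inherit that maximality — indeed they cannot, since you are deliberately forcing a different feed vertex. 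Your claim that ``the stable/periodic dichotomy itself'' supplies the missing slack is not substantiated: the strict inequality in the stable case (and the extra vertex $x_j$ in the periodic case) bounds $|N^+(w)|$ against $|G_{Sed^q(L')}|$, whereas the unresolved issue in the $w=y$ and $w\in A$ cases is whether $x$ (the one possibly spurious member of $G$) is a genuine second out-neighbor of $w$ in $D$; the dichotomy says nothing about that, and one unit of slack cannot simultaneously pay for the spurious $x$ and for the loss incurred by passing from $T$ back to $D$. A second, related gap: you invoke Lemma~1 to ``absorb'' a lost second out-neighbor, but Lemma~1 applies only to \emph{good} digraphs and their intervals $J(f)$, and you never establish that $D$ is good.

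The paper avoids both problems by changing the engine entirely. It first proves (Claim~2) that under $\delta^+_{\Delta}>0$ and $\delta^-_{\Delta}>0$ the set $K$ is an interval of $D$, i.e.\ $D$ is a good digraph — this uses Lemma~3 and the structure of losing paths between strongly connected components of $\Delta$, and is precisely what licenses Lemma~1 and the $Sed$ machinery on $D$ itself rather than on a completion $T$. It then proves (Claim~1) by a short direct degree count that $D[K]$ already contains two vertices with the SNP (using $N^+(x)=\{y\}$, $N^+(y)=A$, and the tournament $D-\{x,y\}$), with no appeal to any completion or to maximality of an index. The sedimentation dichotomy is then run on good median orders of $D$, and in every case the local SNP vertex comes from Claim~1 and is lifted by Lemma~1, with $y\to x_n$ following from the interval property of $J(y_{n-1})$. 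To repair your proposal you would need to supply analogues of both Claim~2 and Claim~1; without them the steps ``yields the SNP in $D[w_1,\ldots,w_{n-1}]$'' and ``Lemma~1 \ldots absorbs the lost second out-neighbor'' are unsupported.
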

\begin{proof}
$\,$\\
\textbf{claim 1:}Suppose $K=V(D)$. If $\Delta$ has no isolated vertex, then D has at least two vertices with the SNP.\\
\textbf{Proof of claim 1:}  The condition $\Delta$ has no isolated vertex implies that for every $a\in A$ and $b\in B$ we have $y\rightarrow a$ and $b\rightarrow x$. Clearly, $N^+(x)=\{y\}$, $N^{+}(y)=A$, $d^+(x)\leq 1\leq |A|\leq d^{++}(x)$, thus $x$ has the SNP.
 Let $H$ be the tournament $D-\{x,y\}$. Then $H$ has a vertex $v$ with the SNP in $H$.
 If $v\in A$, then $d^+(v)=d^+_H(v)\leq d^{++}_H(v)=d^{++}(v)$. If $v\in B$, then $d^+(v)=d^+_H(v)+1\leq d^{++}_H(v)+1=d^{++}(v)$. Whence, $v$ also has the $SNP$ in $D$.\\

 \noindent\textbf{Claim 2:} $D$ is a good digraph.\\
 \textbf{Proof of claim 2:} Let $\mathcal{I_D}$ be the interval graph of $D$. Let $C_1$ and $C_2$ be two distinct connected components of $\Delta$. Then the centers $x$ and $y$ appear in each of the these two connected components, whence $K(C_1)\cap K(C_2)\neq \phi$. Therefore, $\mathcal{I_D}$ is a connected graph, having only one connected component $\xi$. Then,
$K=K(\xi)$.\\
So if $\Delta$ is composed of non trivial strongly connected components, the result holds by lemma \ref{goodstars}.\\
Due to the condition $\Delta$ has neither a source nor a sink, $\Delta$ has a non trivial strongly connected component, hence $N^+(x)\backslash K=N^+(y)\backslash K$.
Now let $v\in K$ and assume without loss of generality that $xv$ is a missing edge. Due to the condition $\Delta$ has neither a source nor a sink, we have that $xv$ belongs to a non trivial strongly connected component of $\Delta$, and in this case $v\in R$ and  $N^+(v)\backslash K = N^+(x)\backslash K=N^+(y)\backslash K$, or $xv$ belongs to a directed path $P=xa_1,yb_1,\cdots , xa_p,yb_p$ joining 2 non trivial strongly connected components $C_1$ and $C_2$ with $xa_1\in C_1$ and $yb_p\in C_2$. There is $i>1$ such that $v=a_i$. $L=xa_{i-1},yb_{i-1},xa_i,yb_i$ is a path in $\Delta$. By the definition of losing cycles we have $N^+(x)\backslash K \subseteq N^+(b_{i-1})\backslash K \subseteq N^+(a_i)\backslash K \subseteq N^+(y)\backslash K=N^+(x)\backslash K $.
Hence $N^+(x)\backslash K = N^+(v)\backslash K$ for all $v\in K$. Since every vertex outside $K$ is adjacent to every vertex in $K$ we also have $N^-(x)\backslash K = N^-(v)\backslash K$ for all $v\in K$. This proves the second claim.\\

 Since $D$ is a good digraph, then it has a good median order \mo. If $J(x_n)=K$, then the result follows by claim 1 and theorem \ref{prince}. Otherwise, $x_n$ is whole, that is $J(x_n)=\{x_n\}$. By theorem \ref{prince}, $x_n$ has the SNP in $D$. So we need to find another vertex with the SNP in $D$. Consider the good median order $L'=x_1x_2...x_{n-1}$ of the good digraph $D'=D[\{x_1,...,x_{n-1}\}]$. Suppose first that $L'$ is stable. There is $q$ for which $Sed^q(L')=y_1...y_{n-1}$ and
$\mid N^+(y_{n-1})\backslash J(y_{n-1})\mid < \mid G_{Sed^q(L')}\backslash J(y_{n-1})\mid$ $(*)$. Note that $y_1...y_{n-1}x_n$ is also a good median order of $D$. By theorem \ref{prince} and claim 1, there is $y\in J(y_{n-1})$ that has the SNP in $D'$, more precisely $|N^+(y)|<|N^{++}(y)|$ due to $(*)$. Since $y\in J(y_{n-1})$ and $y_{n-1}\ra x_n$ then $y\ra x_n$. So $\mid N^+(y)\mid = \mid N^+_{D'}(y)\mid +1\leq \mid N^{++}(y)\mid $.\\

Now suppose that $L'$ is periodic. Since $D$ has no sink then $x_n$ has an out-neighbor $x_j$. Choose $j$ to be the greatest (so that it is the last vertex of its corresponding interval). Note that for every $q$, $x_n$ is an out-neighbor of the feed vertex of $Sed^q(L')$. So $x_j$ is not the feed vertex of any $Sed^q(L')$. Since $L'$ is periodic, $x_j$ must be a bad vertex of $Sed^q(L')$ for some integer $q$, otherwise the index of $x_j$ would
always increase during the sedimentation process. Let $q$ be such an integer and
set $Sed^q(L')=y_1...y_{n-1}$. By theorem \ref{prince} and claim 1, there is $y\in J(y_{n-1})$ that has the SNP in $D'$, more precisely $|N^+_{D'}(y)\bks J(y_{n-1})|<|G_{Sed^q(L')}\bks J(y_{n-1})|$ due to $(*)$. Since $y\in J(y_{n-1})$ and $y_{n-1}\ra x_n$ then $y\ra x_n$.
Note that $y\rightarrow x_n \rightarrow x_j$, $G_{Sed^q(L')}\cup \{x_j\}\bks J(y_{n-1})\subseteq N^{++}(y)\bks J(y_{n-1})$ and $\mid N^+_{D'}(y)\bks J(y_{n-1}) \mid =\mid G_{Sed^q(L')}\bks J(y_{n-1})\mid$.\\
Therefore $| N^+(y)| = | N^+_{D'}(y)| +1 =| N^+_{D'}(y)\bks J(y_{n-1})| +1 + |N^+_{D'}(y)\cap J(y_{n-1})|=|G_{Sed^q(L')}\bks J(y_{n-1})| +1+ |N^+_{D'}(y)\cap J(y_{n-1})\bks J(y_{n-1})|=| G_{Sed^q(L')}\cup \{x_j\}\bks J(y_{n-1})|+|N^+_{D'}(y)\cap J(y_{n-1})| \leq |N_{D}^{++}(y)\bks J(y_{n-1})|+|N^{++}_{D}(y)\cap J(y_{n-1})|\leq |N^{++}(y)|$.

\end{proof}

\subsection{Removing 3 disjoint stars}

\par In this section, $D$ is an oriented graph missing three disjoint stars $S_x$, $S_y$ and $S_z$ with centers $x$, $y$ and $z$ respectively. Set $A=V(S_x)-x$, $B=V(S_y)-x$, $C=V(S_z)-z$ and $K=A\cup B\cup C\cup\{x,y,z\}$. Let $\Delta$ denote the dependency digraph of $D$. The triangle induced by the vertices $x$, $y$ and $z$ is either a transitive triangle or a directed triangle.\\
First we will deal with the case when this triangle is directed, and assume without loss of generality that $x\rightarrow y\rightarrow z\rightarrow x$. This is a particular case of the case when the missing graph is
a disjoint union of stars such that, in the induced tournament by the centers of the missing stars, every vertex is a king.\\

\begin{thm}
Let D be an oriented graph missing 3 disjoint stars whose centers form
a directed triangle. If $\Delta$ has no isolated vertices, then D satisfies SNC.
\end{thm}

\begin{proof}
$\,$\\
\textbf{Claim:} The only possible arcs in $\Delta$ have the forms $xa\rightarrow yb$ or $yb\rightarrow zc$ or $zc\rightarrow xa$, where $a\in A$, $b\in B$ and $c\in C$.\\
\textbf{Proof of the claim:} $xa$ can not lose to $zc$ because $z\rightarrow x$ and $z\in N^{++}(x)$. Similarly $yb$ can not lose to $xa$ and $zc$ can not lose to $yb$.\\

Orient the good missing edges in a convenient way and orient the other edges toward the centers. The obtained
digraph $T$ is a tournament. Let $L$ be a median order of $T$ such that the sum of the indices of $x,y$ and $z$
is maximum. Let $f$ denote the feed vertex of $L$. Due to symmetry, we may assume that $f$ is a whole vertex or
$f=x$ or $f=a\in A$.\\

 Suppose $f$ is a whole vertex. Clearly, $N^{+}(f)=N^{+}_T(f)$. Suppose $f\rightarrow u\rightarrow v$ in $T$. If $(u,v)\in E(D)$ or $uv$ is a good missing edge then $v\in N^{+}(f)\cup N^{++}(f)$. Otherwise, there is
missing edge $rs$ that loses to $uv$ with $r\rightarrow v$ and $u\notin N^{++}(r)\cup N^{+}(r)$. But $f\rightarrow u$, then $f\rightarrow r$, whence $f\rightarrow r\rightarrow v$ and $v\in  N^{+}(f)\cup N^{++}(f)$. Thus, $N^{++}_T(f)=
N^{++}(f)$ and $f$ has the SNP in $D$.\\

Suppose $f=x$. Reorient all the missing edges incident to $x$ toward $x$. In the new tournament $T'$ we have
$N^{+}(x)=N^{+}_{T'}(x)$ and $x$ has the SNP in $T'$. Since $y\in N^{+}(x)$ and $z\in N^{++}(x)$ we have that $N^{++}(x)=N^{++}_{T'}(x)$.
Thus $x$ has the SNP in $D$.\\

Suppose that $f=a\in A$. Reorient $ax$ toward $a$. Suppose $a\rightarrow u\rightarrow v$ in the new tournament $T'$ with $v\neq y$. If $(u,v)\in E(D)$ or $uv$ is a good missing edge then $v\in N^{+}(a)\cup N^{++}(a)$. Otherwise, there is $b\in B$ and $c\in C$ such that $(u,v)=(c,z)$ and $by$ loses to $cz$, then $f\rightarrow c$ implies that $a\rightarrow y$, but $y\rightarrow z$, whence $z\in N^{++}(a)\cup N^{+}(a)$. So the only possible new second out-neighbor of $a$ is $y$, hence if $y\notin N^{++}_{T'}(a)$ then $a$ has the SNP in $D$. Suppose $y\in N^{++}_{T'}(a)$. If $d^{+}_{T'}(a)<d^{++}_{T'}(a)$ then $d^{+}(a)=d^{+}_{T'}(a)\leq d^{++}_{T'}(a)=d^{++}(a)$, hence
$a$ has the SNP in $D$. Otherwise, $d^{+}_{T'}(a)=|G_L|$ and $G_L=N^{++}_{T'}(a)$. So $x,y$ and $z$ are not bad vertices, hence the index of each increases in the median order $Sed(L)$ of $T'$. But the index of $a$ is less than
the index of $x$, then we can give $ax$ its initial orientation as in $T$ nd the same order $Sed(L)$ is a median order
of $T$. However, the sum of indices of $x,y$ and $z$ has increased. A contradiction. Thus $f$ has the SNP in $D$ and $D$ satisfies $SNC$.
\end{proof}

\begin{thm}
Let D be an oriented graph missing 3 disjoint stars whose centers form
a directed triangle. If $\Delta$ has neither a source nor a sink and D has no sink, then D has at least two vertices with the SNP.
\end{thm}

\begin{proof}
  \textbf{Claim 1:} For every $a\in A$, $b\in B$ and $c\in C$ we have:\\
$b\rightarrow x\rightarrow c\rightarrow y\rightarrow a \rightarrow z\rightarrow b$.\\
 \noindent \textbf{Proof of Claim 1:} This is due to the claim in the previous proof and the condition that $\Delta$ has neither a source nor a sink.\\

  \noindent\textbf{Claim 2:} If $K=V(D)$ then $D$ has at least 3 vertices with the SNP.\\
 \noindent \textbf{Proof of Claim 2:} Let $H=D-\{x,y,z\}$. $H$ is a tournament with no sink (dominated vertex). Then $H$ has 2 vertices $u$ and $v$ with SNP in $H$. Without loss of generality we may assume that $u\in A$. But $y\rightarrow u\rightarrow z$, the adding the vertices $x,y$ and $z$ makes $u$ gains only one vertex to its first out-neighborhood and $x$ to its second out-neighborhood. Thus, also $u$ has the SNP in $D$. Similarly, $v$ has the SNP in $D$.
Suppose, without loss of generality, that $|A|\geq|C|$. We have $C\cup\{y\}=N^{+}(x)$ and $A\cup\{z\}=N^{++}(x)$.
Hence, $d^{+}(x)=|C|+1\leq |A|+1\leq d^{++}(x)$, whence, $x$ has the SNP in $D$.\\

 \noindent\textbf{Claim 3:} $D$ is a good oriented graph.\\
 \noindent \textbf{Proof of Claim 3:} Let $\mathcal{I_D}$ be the interval graph of $D$. Let $C_1$ and $C_2$ be two distinct connected components of $\Delta$. The three centers of the missing disjoint stars must appear in each of the these two connected components, whence $K(C_1)\cap K(C_2)\neq \phi$. Therefore, $\mathcal{I_D}$ is a connected graph, having only one connected component $\xi$. Then,
$K=K(\xi)$.\\
So if $\Delta$ is composed of non trivial strongly connected components, the result holds by proposition \ref{goodstars}.\\
Due to the condition that $\Delta$ has neither a source nor a sink, $\Delta$ has a non trivial strongly connected component $C$.

Since $x$, $y$ and $z$ must appear in $C$, we have $N^{+}(x)\backslash K=N^{+}(y)\backslash K=N^{+}(z)\backslash K$. Now let $v\in K$. If $v$ appears in a non trivial strongly connected component of $\Delta$
then $N^{+}(v)\backslash K=N^{+}(x)\backslash K=N^{+}(y)\backslash K=N^{+}(z)\backslash K$.

Otherwise,due to the condition that $\Delta$ has neither a source nor a sink, $v$ appears in a directed path  $P$ of $\Delta$ joining two non trivial strongly connected components $C_1$ and $C_2$ of $\Delta$. By the definition of losing relations we can prove easily that for all $a\in K(C_1)$, $b\in K(P)$ and $c\in K(C_2)$ we have $N^+(a)\backslash K(\xi)\subseteq N^+(b)\backslash K(\xi)\subseteq N^+(c)\backslash K(\xi)$. In particular, for $a=x=c$ and $b=v$. So $N^{+}(v)\backslash K=N^{+}(x)\backslash K$. Similarly, $N^{-}(v)\backslash K=N^{-}(x)\backslash K$. This proves claim 3.\\

To conclude we apply the same argument of the proof of theorem \ref{good2starssnp}.
\end{proof}

\section{Acknowledgments}
The author thanks Pr. Amine El-Sahili for many useful discussions.

\end{document}